\title{\bf The Maximum Number of Subset Divisors of a Given Size}
\author{Samuel Zbarsky\\
\small Carnegie Mellon University\\
\small\tt sa\_zbarsky@yahoo.com\\
}
\date{\small Mathematics Subject Classifications: 05A15, 05D05}
\begin{document}
\maketitle

\section{Abstract}
If $s$ is a positive integer and $A$ is a set of positive integers, we say that $B$ is an $s$-divisor of $A$ if $\sum_{b\in B} b\mid s\sum_{a\in A} a$. We study the maximal number of $k$-subsets of an $n$-element set that can be $s$-divisors. We provide a counterexample to a conjecture of Huynh that for $s=1$, the answer is $\binom{n-1}{k}$ with only finitely many exceptions, but prove that adding a necessary condition makes this true. Moreover, we show that under a similar condition, the answer is $\binom{n-1}{k}$ with only finitely many exceptions for each $s$.

\section{Introduction}
If $X$ is a set of positive integers, let $\sum X$ denote $\sum_{x\in X}x$.
Let $A$ be a finite subset of the positive integers. The elements of $A$ are $a_1<a_2<\cdots<a_n$ and let $B$ be a subset of $A$.
We say that $B$ is a \emph{divisor} of $A$ if $\sum B\mid\sum A$. 
We define $d_k(A)$ to be the number of $k$-subset divisors of $A$ and let $d(k,n)$ be the maximum value of $d_k(A)$ over all sets $A$ of $n$ positive integers.

Similarly, for $s\ge 1$ a positive integer, we say that $B$ is an \emph{s-divisor} of $A$ if $\sum B\mid s\sum A$.
We define $d^s_k(A)$ to be the number of $k$-subset $s$-divisors of $A$ and let $d^s(k,n)$ be the maximum value of $d^s_k(A)$ over all sets $A$ of $n$ positive integers.

Note that the concepts of divisor and 1-divisor coincide. Also, if $B$ is a divisor of $A$, then $B$ is an $s$-divisor of $A$ for all $s$, so $d^s_k(A)\ge d_k(A)$ and $d^s(k,n)\ge d(k,n)$

Huynh~\cite{huynh14} notes that for any values of $a_1,\ldots,a_{n-1}$, we can pick such an $a_n$ that any $k$-subset of $\{a_1,\ldots,a_{n-1}\}$ will be an $A$-divisor.
Therefore $d(k,n)\ge \binom{n-1}{k}$ for all $1\le k\le n$.
This motivates the definition that $A$ is a \emph{k-anti-pencil} if the set of $k$-subset divisors of $A$ is $\binom{A\wo\{a_n\}}{k}$.
We similarly define $A$ to be a $(k,s)$-\emph{anti-pencil} if the set of $k$-subset $s$-divisors of $A$ is $\binom{A\wo\{a_n\}}{k}$.

Huynh~\cite{huynh14} also formulates the following conjecture (Conjecture 22).
\begin{conjecture}\label{huynhconj}
For all but finitely many values of $k$ and $n$, $d(k,n)=\binom{n-1}{k}$.
\end{conjecture}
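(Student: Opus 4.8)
Since the abstract already flags that this conjecture is false, the honest proposal is an attempted proof of the matching upper bound together with the precise obstruction, which doubles as the counterexample.

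\emph{The attempted proof.} The plan is to establish $d(k,n)\le\binom{n-1}{k}$ for all but finitely many pairs. Fix $A=\{a_1<\cdots<a_n\}$ and put $N=\sum A$. First I would observe that for $k<n$ every $k$-subset divisor $B$ is a proper subset, so $\sum B<N$, and then $\sum B\mid N$ forces $\sum B\le N/2$. Next I would split the $k$-subsets into the $\binom{n-1}{k}$ that omit $a_n$ and the $\binom{n-1}{k-1}$ that contain $a_n$; the former already account for the entire target $\binom{n-1}{k}$, so the whole burden is to show that only boundedly many $a_n$-containing $k$-subsets can be divisors---say by exhibiting an injection from $a_n$-containing divisors into non-divisors that omit $a_n$. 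The thing one would hope to exploit is that the largest element $a_n$ is big enough that $\sum B>N/2$ whenever $a_n\in B$, which by the first step would rule out every such $B$.

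\emph{Where it breaks, and the counterexample.} That last hope is the main obstacle, and it is simply false: nothing makes $a_n$ a large fraction of $N$. Already $k=1$ refutes the conjecture. Take $a_1=1$, $a_2=2$, and $a_i=a_1+\cdots+a_{i-1}$ for $3\le i\le n$, so that $a_i=3\cdot2^{i-3}$ for $i\ge3$ and $N=3\cdot2^{n-2}$; then every $a_i$ divides $N$, so all $n$ singletons are divisors and $d(1,n)=n>n-1=\binom{n-1}{1}$ for every $n\ge3$. This is an infinite family of exceptions, so Conjecture~\ref{huynhconj} fails as stated. (The degenerate pairs with $k=n$, and to a lesser extent $k$ close to $n$, would need separate treatment even in any true variant, but it is the $k=1$ family that genuinely sinks the statement.)

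\emph{What I would actually prove.} The real target is the corrected theorem promised in the abstract: identify a necessary hypothesis on $A$---morally, one preventing the top element from being ``absorbed'' (a condition forcing $\sum B>\tfrac{s}{s+1}N$ for every $B$ meeting $a_n$, or a size relation tying $k$ to $n$)---and show that under it $d(k,n)=\binom{n-1}{k}$, and then $d^s(k,n)=\binom{n-1}{k}$, with only finitely many exceptions. Under such a hypothesis I would expect the charging argument above to go through: the $a_n$-avoiding subsets realize the lower bound, while a rigidity lemma---that $\sum B\mid sN$ together with $\sum B$ bounded away from $N$ forces $B$ to be very structured once $n$ is large relative to $k$---kills every $a_n$-containing candidate and simultaneously pins down the extremal sets. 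The delicate part will be the dependence on $s$: replacing divisors of $N$ by divisors of $sN$ enlarges the pool of admissible subset sums and weakens the bound $\sum B\le N/2$ to something like $\sum B\le\tfrac{s}{s+1}N$, so the rigidity lemma must be robust enough to survive that loss.
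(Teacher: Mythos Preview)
Your diagnosis is correct and matches the paper's: Conjecture~\ref{huynhconj} is false, and the paper disproves it via precisely the two infinite families you identify---the trivial $k=n$ case and a $k=1$ construction in which every singleton divides the total (the paper's specific $k=1$ set differs from your doubling sequence $1,2,3,6,12,\ldots$, but the mechanism is identical and your example is valid).

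One remark on the speculative part: your outline for the corrected theorem---an injection from $a_n$-containing divisors into non-divisors, driven by a ``rigidity lemma''---is not how the paper proceeds. The paper instead splits into four regimes of $(k,n)$ and in each reaches a contradiction by combinatorial means quite different from a charging argument: antichain-width bounds on products of chains (ultimately resting on Ball's hypercube-slice volume estimate), growth bounds on the divisor function, and a pigeonhole ``stem-and-tail'' construction controlling how many $k$-subsets can share a prescribed sum.
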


In this paper, we provide infinite families of counterexamples, but prove that, with the exception of these families, the conjecture is true. This gives us the following modified form.

\begin{conjecture}\label{huynhconjmod}
For all but finitely many integer pairs $(k,n)$ with $1<k<n$, $d(k,n)=\binom{n-1}{k}$.
\end{conjecture}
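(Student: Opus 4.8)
The plan is to show there is an absolute constant $N_0$ such that $d(k,n)=\binom{n-1}{k}$ whenever $n\ge N_0$ and $1<k<n$; since $d(k,n)\ge\binom{n-1}{k}$ in all cases, it is enough to prove that every $n$-element set $A=\{a_1<\cdots<a_n\}$ of positive integers has at most $\binom{n-1}{k}$ $k$-subset divisors. Write $\sigma=\sum A$. Since a $k$-subset divisor $B$ is a proper subset of $A$ we have $\sum B<\sigma$, so $\sum B\mid\sigma$ forces $\sum B=\sigma/j$ for some integer $j\ge 2$; thus the only admissible subset-sums form the sparse sequence $\sigma/2>\sigma/3>\cdots$, and for each value $v$ the number of $k$-subsets of $A$ with sum $v$ is at most $\binom{n-1}{k-1}$ (delete the largest element: what remains is a $(k-1)$-subset of $\{a_1,\ldots,a_{n-1}\}$, and the deleted element equals $v$ minus the sum of the rest).

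The first step is to dispose of the case $a_n>\sigma/2$: then every $k$-subset divisor has sum at most $\sigma/2<a_n$, hence lies inside $A\setminus\{a_n\}$, and $d_k(A)\le\binom{n-1}{k}$ at once. So assume $a_n\le\sigma/2$. Counting $k$-subsets of $A\setminus\{a_n\}$ two ways, $d_k(A)-\binom{n-1}{k}$ equals the number of $k$-subset divisors containing $a_n$ minus the number of $k$-subsets of $A\setminus\{a_n\}$ whose sum does not divide $\sigma$. Hence it suffices to construct an injection $\phi$ from the divisor $k$-subsets containing $a_n$ into the non-divisor $k$-subsets of $A\setminus\{a_n\}$.

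For a divisor $B$ with $\sum B=\sigma/j$, the map I would try is $\phi(B)=(B\setminus\{a_n\})\cup\{a_\ell\}$ for a suitable $a_\ell\in A\setminus B$; its sum is $\sigma/j-(a_n-a_\ell)$, which lies strictly below $\sigma/j$, and as $a_\ell$ runs over the $n-k$ elements of $A\setminus B$ these sums are distinct and confined to an interval of length less than $a_n$. One checks that, once $n\ge N_0$, the integer $\sigma$ cannot have $n-k$ divisors inside such an interval --- unless $a_n$ is close to $\sum B$ --- so a non-divisor choice of $a_\ell$ exists, and picking it by a canonical rule, together with recovering $B$ from $\phi(B)$, gives the injection. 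The main obstacle is exactly the excluded regime: if $a_n$ is close to $\sum B=\sigma/j$ --- equivalently, the remaining $k-1$ elements of $B$ are tiny --- then the candidate sums sweep an interval of length comparable to $\sigma/j$, which may contain many divisors of $\sigma$, so the swap can fail. I expect to handle this by hand: when $a_n\ge\varepsilon\sigma$ one swaps a small element of $B$ rather than $a_n$, and when every element is small relative to $\sigma$ one instead bounds the number of admissible $j$ directly (each satisfies $\sigma/j\ge a_1+\cdots+a_k\ge\binom{k+1}{2}$, giving $j\le\sigma/\binom{k+1}{2}$, which together with control of $\sigma$ is of order $(n-k)/k$) and multiplies by the per-value bound $\binom{n-1}{k-1}$, using $\binom{n-1}{k-1}\cdot\frac{n-k}{k}=\binom{n-1}{k}$. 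Reconciling the two extreme shapes of $A$ --- very spread out, where $\sigma$ is enormous but Sidon-like behaviour means each value is attained by few $k$-subsets, versus tightly packed, where $\sigma$ is of order $kn$ and few admissible $j$ lie in range --- and getting both estimates down to exactly $\binom{n-1}{k}$ with no loss, uniformly over $1<k<n$, is the crux of the argument.
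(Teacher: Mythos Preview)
Your proposal is a sketch with explicitly acknowledged gaps, not a proof. The opening reductions --- the case $a_n>\sigma/2$, and recasting the problem as an injection from divisor $k$-sets containing $a_n$ into non-divisor $k$-sets inside $A\setminus\{a_n\}$ --- are sound, but neither tool you offer for the remaining case goes through. For the swap map $\phi(B)=(B\setminus\{a_n\})\cup\{a_\ell\}$ you have not shown injectivity (distinct $B_1,B_2\ni a_n$ may land on the same image via different choices of $a_\ell$, and ``a canonical rule'' for $a_\ell$ does not by itself let you recover which element was inserted), nor have you shown that among the $n-k$ candidates some $a_\ell$ yields a non-divisor: that would need control on how many divisors of $\sigma$ lie in an interval of length $a_n$, and $\sigma$ carries no a~priori bound in terms of $n$. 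Your fallback count is off by a polynomial factor already for $A=\{1,\ldots,n\}$, $k=2$: there $\sigma=\binom{n+1}{2}$, so $j\le\sigma/\binom{k+1}{2}\sim n^{2}/6$, and multiplying by the per-value bound $\binom{n-1}{1}=n-1$ gives $\sim n^{3}/6$, far above the target $\binom{n-1}{2}\sim n^{2}/2$. The identity $\binom{n-1}{k-1}\cdot\frac{n-k}{k}=\binom{n-1}{k}$ is correct, but the number of admissible $j$ is nowhere near $(n-k)/k$ in this or in the generic case.

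The paper's route is structurally unrelated to yours. After rescaling to $\sum A=1$ it splits into four regimes according to the relative sizes of $k$ and $n$. The main engines are poset-theoretic: a width bound for $\binom{X}{d}$ under the componentwise order, together with Mirsky's theorem, forces long chains of $k$-subset divisors, which (since their sums must run through distinct values $1/m$) pins $a_{n-1}$ down to $O(1/n)$ and contradicts $\sum A=1$. The regime $k>\tfrac{2}{3}n$ uses a random circular arrangement of $A$ to bound how many $k$-subsets can have small sum, and the ``$k$ fixed, $n$ large'' regime uses a stems-and-tails pigeonhole construction combined with an arithmetic lemma limiting the number of representations of a fixed rational as $\tfrac{a}{x}+\tfrac{b}{y}$. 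These are the ideas that actually close what you call ``the crux of the argument''; I do not see how the swap-injection line reaches them.
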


For convenience, we now rescale, dividing every element of $A$ by $\sum A$, so that now the elements of $A$ are positive rational numbers and $\sum A=1$. Under this rescaling, $B\subseteq A$ is a divisor of $A$ if and only if $\sum B=\frac{1}{m}$ for some positive integer $m$ and $B$ is an $s$-divisor of $A$ if and only if $\sum B=\frac{s}{m}$ for some positive integer $m$. Clearly, the values of $d(k,n)$ and $d^s(k,n)$ do not change.

The $k<n$ condition in Conjecture~\ref{huynhconjmod} is necessary since it is easy to see that $d(n,n)=1>\binom{n-1}{n}$. Also, if
\[
A=\left\{\half,\frac{1}{4},\ldots,\frac{1}{2^{n-2}},\frac{1}{3(2^{n-1})},\frac{1}{3(2^{n-2})}\right\}
\]
then $\sum A=1$, so $d_1(A)=n$ and $d(1,n)\ge n>\binom{n-1}{1}$. Therefore the $1<k$ condition is necessary.

However, we prove that these families cover all but finitely many exceptions.
\begin{theorem}\label{s1case}
For all but finitely many pairs $(k,n)$, if $1<k<n$, $|A|=n$, and $d_k(n)\ge\binom{n-1}{k}$, then $A$ is a $k$-anti-pencil.
\end{theorem}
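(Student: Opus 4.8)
The plan is to keep the paper's rescaling, so $A=\{a_1<\cdots<a_n\}$ consists of distinct positive reals with $\sum A=1$ and a $k$-subset $B$ is a divisor exactly when $\sum B\in\{\tfrac12,\tfrac13,\tfrac14,\dots\}$ (note $\sum B<1$ since $k<n$). I would split on the size of the largest element $a_n$. Suppose first $a_n\ge\tfrac12$. Since $k\ge 2$, any $k$-subset $B$ containing $a_n$ satisfies $\tfrac12\le a_n<\sum B<1$, so $\sum B$ is not the reciprocal of an integer and $B$ is not a divisor. Hence every $k$-subset divisor omits $a_n$, and as there are only $\binom{n-1}{k}$ such $k$-subsets, the hypothesis $d_k(A)\ge\binom{n-1}{k}$ forces the divisors to be exactly $\binom{A\setminus\{a_n\}}{k}$, i.e.\ $A$ is a $k$-anti-pencil. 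This part requires no exceptions at all.

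It remains to treat $a_n<\tfrac12$, where I would instead show $d_k(A)<\binom{n-1}{k}$ for all but finitely many $(k,n)$, which makes this case vacuous outside a finite set and finishes the proof. The two ingredients are: (i) every $k$-subset divisor has sum in $[\sigma,\tfrac12]$, where $\sigma$ is the sum of the $k$ smallest elements, so at most $1/\sigma$ reciprocals $\tfrac1m$ are relevant; and (ii) the number of $k$-subsets of $n$ distinct positive reals with a fixed sum is at most $N(n,k)$, the corresponding coefficient of the Gaussian binomial $\binom nk_q$ (arithmetic progressions being extremal, so $N(n,k)$ is smaller than $\binom nk$ by a factor polynomial in $n$). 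Together these give $d_k(A)\le N(n,k)/\sigma$, which already beats $\binom{n-1}{k}$ once $\min(k,n-k)$ is large, provided $\sigma$ exceeds a suitable negative power of $n$. When instead some elements are tiny (so $\sigma$ is small), I would split the relevant reciprocals into those that are $\ge\varepsilon$, still bounded by $O(N(n,k)/\varepsilon)$ via (ii), and those that are $<\varepsilon$: a $k$-subset of sum $<\varepsilon$ can contain only boundedly many elements exceeding $\varepsilon/k$, so a direct count bounds these by far less than $N(n,k)$; choosing $\varepsilon$ in terms of $k,n$ balances the two parts. Finally, the regime where $\min(k,n-k)$ is bounded — where $\binom{n-1}{k}$ is itself only polynomial in $n$ — I would dispatch more elementarily: by Markov's inequality applied to $\sum A=1$ only boundedly many elements can have size $\ge c$, which limits how many $(n-k)$-subsets can have sum near $\tfrac12$ (the relevant case when $k$ is near $n$) and dually controls small $k$. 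The finitely many pairs $(k,n)$ escaping all of these estimates form the exceptional set.

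I expect the main obstacle to be the intermediate, highly unbalanced regime: $a_n<\tfrac12$ but with a few moderately large elements and many tiny ones, so that $k$-subset sums sweep across nearly all of $(0,1)$ and a large number of reciprocals $\tfrac1m$ become attainable. Bounding the total number of $k$-subsets whose sum lands in the sparse set $\{\tfrac1m\}$ — rather than value by value — calls for a Littlewood--Offord-type estimate strong enough to beat $\binom{n-1}{k}$ uniformly across the whole range of $m$, and sharpening it enough to pin down exactly which small $(k,n)$ are genuine exceptions is the technical heart of the argument.
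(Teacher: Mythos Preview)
Your $a_n\ge\tfrac12$ case is correct, and it is exactly the reduction the paper makes at the start of its main argument: assuming $A$ is not a $k$-anti-pencil while $d_k(A)\ge\binom{n-1}{k}$ forces some divisor to contain $a_n$, hence $a_n<\tfrac12$. So far the approaches agree.

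The gap is in the $a_n<\tfrac12$ case, and it is not a detail but the whole theorem. Your global estimate $d_k(A)\le N(n,k)/\sigma$ is fine when $\sigma$ is bounded below, and your ingredient (ii) is essentially the antichain bound the paper proves as Lemma~\ref{simplex}. But your repair for small $\sigma$ does not work as written. After splitting at $\varepsilon$, the large-sum part is indeed $O(N(n,k)/\varepsilon)$; for the small-sum part, however, the sentence ``a $k$-subset of sum $<\varepsilon$ can contain only boundedly many elements exceeding $\varepsilon/k$, so a direct count bounds these by far less than $N(n,k)$'' does not yield any bound. A $k$-subset of sum $<\varepsilon$ merely has all its elements below $\varepsilon$, and $A$ may have almost all of its $n$ elements below $\varepsilon$; Markov on $\sum A=1$ bounds the number of \emph{large} elements, which is the wrong direction. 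Nothing you have said controls how many such $k$-subsets land on some $1/m$, nor how many values of $m$ are hit. You correctly identify this in your last paragraph as ``the technical heart of the argument,'' but that means the proposal is a plan, not a proof.

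For comparison, the paper does not attempt a single product bound (antichain width)$\times$(number of targets). It splits into four regimes by the relative size of $k$ and $n$ and in each extracts structural consequences from $d_k(A)\ge\binom{n-1}{k}$ that contradict $a_n<\tfrac12$. For $n\ge\tfrac32 k$ and for $\tfrac23 n<k<n-C$ a pigeonhole over families $P_S$ produces either a long chain of divisors (forcing $a_{n-1}$ too small via Remark~\ref{chains}) or a large antichain with constant sum (contradicting Lemma~\ref{simplex}); the latter uses a random circular arrangement to show most $k$-subsets have sum above $\tfrac14$, so only boundedly many reciprocals matter. For $k$ near $n-1$ the argument is elementary. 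The genuinely hard regime is fixed $k$ with $n\to\infty$, where the paper builds a ``stem and tail'' configuration and invokes a number-theoretic lemma (Lemma~\ref{fracsum}) bounding the representations $\tfrac{m}{n}=\tfrac{a}{x}+\tfrac{b}{y}$; this arithmetic input, entirely absent from your sketch, is what replaces the Littlewood--Offord estimate you were hoping for.
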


Note that this immediately implies Conjecture~\ref{huynhconjmod}.

If we are interested in $s$-divisors, we get another family of exceptions. If $s\ge 2$, $a_n=\frac{1}{s+1}$ and $a_{n-1}=\frac{2}{s+2}$, then $d^s_{n-1}(A)\ge 2$, so $d^s(n-1,n)\ge 2>\binom{n-1}{n-1}$. However, we prove that these cover all but finitely many exceptions.

\begin{theorem}\label{generalcase}
Fix $s\ge 1$. For all but finitely many pairs $(k,n)$ (with the number of these pairs depending on $s$), if $1<k<n-1$, $|A|=n$, and $d^s_k(n)\ge\binom{n-1}{k}$, then $A$ is a $(k,s)$-anti-pencil.
\end{theorem}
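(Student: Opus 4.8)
The plan is to work in the rescaled picture ($\sum A=1$, the $a_i$ distinct positive rationals) and then to pass to complements. Put $\ell=n-k$; the hypothesis $1<k<n-1$ is equivalent to $2\le\ell\le n-2$, a condition symmetric in $k$ and $\ell$. A $k$-subset $B$ is an $s$-divisor exactly when its complement $\overline B:=A\wo B$ satisfies $\sum\overline B=\frac{j}{j+s}$ for some positive integer $j$; call such an $\ell$-set \emph{special}. Then $A$ is a $(k,s)$-anti-pencil iff the special $\ell$-sets are precisely the $\binom{n-1}{\ell-1}=\binom{n-1}{k}$ $\ell$-sets containing $a_n$. From now on assume $A$ is \emph{not} a $(k,s)$-anti-pencil while $d^s_k(A)\ge\binom{n-1}{k}$; the goal is a contradiction for all $(k,n)$ outside a finite set whose size may grow with $s$. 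If every special $\ell$-set contained $a_n$, then---there being at least $\binom{n-1}{\ell-1}$ of them and only that many $\ell$-sets through $a_n$---they would be exactly those, making $A$ an anti-pencil. Hence some special $\ell$-set avoids $a_n$, so its complement is an $s$-divisor $k$-set containing $a_n$ of sum $\frac{s}{j+s}$; since $k\ge2$ this gives $a_n<\frac{s}{j+s}\le\frac{s}{s+1}$, so every element of $A$ is $<\frac{s}{s+1}$.

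The main device controls special $\ell$-sets through their sums. Every $\ell$-set sum lies in $[S_\ell,L_\ell]$, where $S_\ell,L_\ell$ are the sums of the $\ell$ smallest and of the $\ell$ largest elements; write $\sigma:=1-L_\ell$, the sum of the $k$ smallest elements. A value $\frac{j}{j+s}$ can lie in $[S_\ell,L_\ell]$ only if $j\le\frac{sL_\ell}{1-L_\ell}=\frac{s(1-\sigma)}{\sigma}$, so at most $\frac{s}{\sigma}+1$ values are available. Moreover, for a fixed sum $v$ and a fixed $(\ell-1)$-set $C$, at most one $\ell$-set extending $C$ has sum $v$ (distinct extensions have distinct sums), whence a double count over all $(\ell-1)$-sets shows that at most $\frac1\ell\binom{n}{\ell-1}=\frac{n}{\ell(k+1)}\binom{n-1}{k}$ $\ell$-sets can share any fixed sum. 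Combining,
\[
d^s_k(A)\ \le\ \left(\frac{s}{\sigma}+1\right)\cdot\frac{n}{\ell(k+1)}\binom{n-1}{k}.
\]
Fix a threshold $\tau=\tau(s)$ and set $C=\frac{s}{\tau}+1$. When $\sigma\ge\tau$ the right side is at most $C\cdot\frac{n}{\ell(k+1)}\binom{n-1}{k}$, which falls strictly below $\binom{n-1}{k}$---contradicting our assumption---whenever $\frac{1}{k+1}+\frac{1}{n-k}<\frac1C$, and this holds once $k$ and $n-k$ both exceed $2C$. So in the regime $\sigma\ge\tau$ the only pairs left unhandled have $k\le2C$ or $n-k\le2C$; these bounded-uniformity cases, and the complementary regime $\sigma<\tau$, must be treated separately.

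For bounded $k$ (say $2\le k\le 2C$) one argues directly, regardless of $\sigma$: the hypergraph of $s$-divisor $k$-sets omits at most $\binom{n}{k}-\binom{n-1}{k}=\binom{n-1}{k-1}$ $k$-sets, so some element $a_i$ lies in a $1-o(1)$ fraction of the $k$-sets through it; the sums $\frac{s}{m}=\sum B$ over these $B\ni a_i$ are distinct, which forces $a_i$ to be small (of size $O(s/n)$) and pins the other elements to a set of the shape $\{\frac{s}{m}-a_i\}$; iterating over a few such small elements isolates one element lying in no $s$-divisor $k$-set while every $k$-set avoiding it is an $s$-divisor, i.e.\ the anti-pencil structure, with only finitely many small-$n$ exceptions for each such $k$. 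The bounded-$\ell$ case ($2\le n-k\le 2C$) runs identically with the family $\{1-\frac{s}{m}\}$ in place of $\{\frac{s}{m}\}$. There remains the regime $\sigma<\tau$, where $A$ has at most $1/\delta$ elements of size $\ge\delta$ for a suitable $\delta=\delta(s)$: here one exploits $a_n<\frac{s}{s+1}$ and the special $\ell$-set through $a_n$ supplied above to show that the small elements contribute an essentially rigid amount to any special $\ell$-set, so the number of special $\ell$-sets is at most a bounded number of ``large-part patterns'' times the number of subcollections of small elements with a prescribed sum---and the same double count as before pushes this below $\binom{n-1}{k}$ once $(k,n)$ leaves a finite, $s$-dependent list.

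The hard part is the regime $\sigma<\tau$: there the counting inequality is vacuous, and ordinary anti-pencils (after rescaling their largest element is huge, so $\sigma$ is tiny) show that one cannot win by counting alone but must genuinely use the failure of the anti-pencil property---concretely the $s$-divisor $k$-set through $a_n$---to rigidify the small elements. Making the heuristic ``the small part is too light to move a special set from one admissible sum to the next'' precise means choosing $\delta(s),\tau(s)$ compatibly with the gaps $\frac{s}{(j+s)(j+1+s)}$, after which bounding the subcollections of small elements with a given sum is essentially this very theorem one dimension down, so the induction and bookkeeping need care. This $s$-dependence of the thresholds is precisely why the exceptional set is allowed to grow with $s$; and $k=n-1$---where the complement of an $(n-1)$-set is a single element and the configuration $a_n=\frac{2}{s+2},\ a_{n-1}=\frac{1}{s+1}$ with the remaining elements tiny defeats the conclusion for every $s\ge2$---must be excluded from the start.
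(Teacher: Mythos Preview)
Your double count $d_k^s(A)\le\bigl(\frac{s}{\sigma}+1\bigr)\frac{n}{\ell(k+1)}\binom{n-1}{k}$ is correct and disposes cleanly of the regime where $\sigma$ is bounded below and $\min(k,n-k)$ is large---an approach genuinely different from the paper's.  But the proposal is not a proof: the remaining regimes are only sketched, and the sketches have real gaps.  In the bounded-$k$ paragraph, the claim ``the sums $\frac{s}{m}=\sum B$ over these $B\ni a_i$ are distinct'' is false for $k\ge3$ (and the line ``pins the other elements to $\{\frac{s}{m}-a_i\}$'' only makes sense for $k=2$); more seriously, even once one shows $a_{n-1}=O(s/n)$, the passage from there to the anti-pencil conclusion is where all the work lies, and your ``iterating over a few such small elements'' does not do it.  The $\sigma<\tau$ regime you yourself flag as ``the hard part'' and leave as a heuristic; since every genuine anti-pencil lives there (its $\sigma$ is tiny), this is precisely where the non-anti-pencil hypothesis must be exploited, and your suggestion that it reduces to ``this very theorem one dimension down'' is not an induction---the prescribed sums for the small-element subcollections are not of the form $s/m$, so there is no smaller instance of the theorem to invoke.

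For comparison, the paper never splits on $\sigma$.  It partitions by the position of $k$ relative to $n$ into four ranges ($k$ fixed; $k$ large with $n\ge\frac32 k$; $\frac23 n<k<n-c(s)$; $n-c(s)\le k<n-1$) and uses tools absent from your outline: a width bound for the poset $\binom{X}{d}$ (obtained from products of chains and Ball's cube-slice theorem) combined with Mirsky's theorem to produce long chains of $s$-divisors and hence force $a_{n-1}=O(s/n)$; a random circular-ordering argument in the third range; and, for fixed $k$, a stems-and-tails construction together with a number-theoretic lemma bounding representations $\frac{m}{n}=\frac{a}{x}+\frac{b}{y}$.  That last case alone is over a page, and nothing in your sketch substitutes for it.
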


Note that this immediately implies the following corollary.
\begin{corollary}\label{generalcasecorr}
Fix $s\ge 1$. Then $d^s(k,n)=\binom{n-1}{k}$ for all but finitely many pairs $(k,n)$ with $1<k<n-1$ (with the number of these pairs depending on $s$).
\end{corollary}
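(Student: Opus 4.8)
The plan is to deduce Corollary~\ref{generalcasecorr} directly from Theorem~\ref{generalcase} together with the standing lower bound $d^s(k,n)\ge\binom{n-1}{k}$; no new combinatorics is needed beyond these two ingredients, and indeed the author already flags the implication as immediate.

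First I would record the lower bound. Since every divisor of $A$ is an $s$-divisor of $A$, we have $d^s_k(A)\ge d_k(A)$ for every finite set $A$ of positive integers, hence $d^s(k,n)\ge d(k,n)$. Huynh's observation recalled in the introduction gives $d(k,n)\ge\binom{n-1}{k}$ for all $1\le k\le n$, so $d^s(k,n)\ge\binom{n-1}{k}$ with no exceptions whatsoever. (Rescaling so that $\sum A=1$ makes no difference to this or to anything below, as already noted.)

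For the matching upper bound, let $\mathcal E_s$ be the finite set of pairs $(k,n)$ that Theorem~\ref{generalcase} excludes for this value of $s$. Fix a pair $(k,n)\notin\mathcal E_s$ with $1<k<n-1$ and suppose, toward a contradiction, that $d^s(k,n)>\binom{n-1}{k}$. By definition of $d^s(k,n)$ as a maximum of integer-valued quantities, some $n$-element set $A$ satisfies $d^s_k(A)\ge\binom{n-1}{k}+1$, and in particular $d^s_k(A)\ge\binom{n-1}{k}$. Since $(k,n)\notin\mathcal E_s$, Theorem~\ref{generalcase} forces $A$ to be a $(k,s)$-anti-pencil, i.e.\ its set of $k$-subset $s$-divisors is exactly $\binom{A\wo\{a_n\}}{k}$, which has cardinality $\binom{n-1}{k}$. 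Thus $d^s_k(A)=\binom{n-1}{k}$, contradicting $d^s_k(A)\ge\binom{n-1}{k}+1$. Hence $d^s(k,n)\le\binom{n-1}{k}$, and combined with the lower bound this yields $d^s(k,n)=\binom{n-1}{k}$ for every $(k,n)\notin\mathcal E_s$ with $1<k<n-1$. As $\mathcal E_s$ is finite, this is precisely the statement of the corollary.

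There is essentially no obstacle internal to this argument — it is a purely formal consequence of Theorem~\ref{generalcase}, so all the actual difficulty resides in that theorem. The one point worth flagging is the necessity of the range $1<k<n-1$: the families exhibited in the introduction (the explicit set with $d_1(A)=n$ for $k=1$, the trivial case $k=n$, and, for $s\ge 2$, the set with $a_n=\frac1{s+1}$ and $a_{n-1}=\frac2{s+2}$ realizing $d^s_{n-1}(A)\ge 2$) show that $d^s(k,n)=\binom{n-1}{k}$ genuinely fails for infinitely many pairs outside this range, so the restriction cannot be weakened.
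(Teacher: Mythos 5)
Your proposal is correct and matches the paper's intent exactly: the paper treats the corollary as an immediate consequence of Theorem~\ref{generalcase} combined with the universal lower bound $d^s(k,n)\ge d(k,n)\ge\binom{n-1}{k}$, which is precisely the two-step argument you give. Nothing further is needed.
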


We will prove Theorem~\ref{generalcase}. In the $s=1$ case, where $k=n-1$, if $i\le n-1$, then $\sum(A\wo\{a_i\})>\half$, so $A\wo\{a_i\}$ is not a divisor of $A$. This, together with the $s=1$ case of Theorem~\ref{generalcase}, gives us Theorem~\ref{s1case}.

\section{Lemmas} %need to add definitions, etc.

Take a $d$-dimensional lattice cube with $n$ lattice points per edge. Define a poset on the lattice points by $(x_1,\ldots,x_d)\le(y_1,\ldots,y_d)$ if $x_i\le y_i$ for all $i$.
\begin{lemma}
The largest antichain in this poset has at most $(n+d-2)^{d-1}\sqrt{\frac{2}{d}}$ elements. 
\end{lemma}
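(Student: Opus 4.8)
The plan is to bound the size of an antichain in the lattice cube $\{0,1,\ldots,n-1\}^d$ by slicing it into hyperplanes according to the coordinate sum. Given a point $(x_1,\ldots,x_d)$, let its \emph{level} be $\sigma = x_1+\cdots+x_d$, which ranges over $0,1,\ldots,d(n-1)$. Two points of the same level are automatically incomparable, but the converse fails, so an antichain need not live in a single level. Nevertheless, the standard symmetric-chain or normalized-matching heuristic suggests that the largest antichain should be concentrated near the middle level $\sigma = d(n-1)/2$, and I would aim to show that \emph{no single level} contributes too many points and then leverage this. The cleanest route is: (i) show the number of lattice points at a fixed level $\sigma$ is at most the number at the central level; (ii) bound the number of points at the central level; (iii) upgrade from ``one level'' to ``one antichain'' using a chain decomposition.

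First I would handle the combinatorial count. The number of points of $\{0,\ldots,n-1\}^d$ at level $\sigma$ is the coefficient of $q^\sigma$ in the Gaussian-type generating function $\left(\frac{1-q^n}{1-q}\right)^d = (1+q+\cdots+q^{n-1})^d$, and it is a classical fact that this sequence of coefficients is symmetric and unimodal, so the maximum is attained at (or adjacent to) $\sigma = d(n-1)/2$. To bound that central coefficient, I would compare $(1+q+\cdots+q^{n-1})^d$ evaluated via a local central limit estimate: the uniform distribution on $\{0,\ldots,n-1\}$ has variance $\frac{n^2-1}{12}$, so the sum of $d$ i.i.d.\ copies has variance $\frac{d(n^2-1)}{12}$, and the central probability is roughly $\frac{1}{\sqrt{2\pi \cdot d(n^2-1)/12}}$. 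Multiplying by the total count $n^d$ gives a central-level size of order $n^d \big/\sqrt{\pi d(n^2-1)/6}$; being slightly wasteful and using $n-1$ in place of $\sqrt{n^2-1}$-type bounds, this is at most $(n-1)^{d-1}\sqrt{6/(\pi d)} \le (n-1)^{d-1}\sqrt{2/d}$, which is already essentially the claimed bound with $n-1$ in place of $n+d-2$. The slack between $(n-1)^{d-1}$ and $(n+d-2)^{d-1}$ is exactly what I would spend on the final step.

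To pass from a single level to an arbitrary antichain, I would exhibit a partition of $\{0,\ldots,n-1\}^d$ into chains such that the number of chains is at most the size of the largest level; then any antichain meets each chain at most once, giving the bound. The natural way to get such a chain decomposition is to note that the poset $\{0,\ldots,n-1\}^d$ is a product of $d$ chains of length $n$, hence is a \emph{normalized matching} / \emph{Peck} poset (it has the symmetric chain decomposition property, by repeated application of de Bruijn--Tengbergen--Kruyswijk or by the fact that a product of symmetric-chain orders is again one). In a symmetric chain decomposition every chain passes through a middle level, so the number of chains equals the size of the middle level, and we are done — except the middle level of $\{0,\ldots,n-1\}^d$ has size exactly the central coefficient bounded above, so the antichain bound is that same quantity, not the slightly larger $(n+d-2)^{d-1}\sqrt{2/d}$. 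So in fact I expect the proof to establish the stronger statement and then simply observe $(n-1)^{d-1} \le (n+d-2)^{d-1}$; the looser form in the lemma is presumably stated because it is all that is needed downstream and avoids fussing over exact constants.

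The main obstacle is making the central-coefficient estimate rigorous with an explicit, clean constant rather than an asymptotic one: a bare local CLT gives $(1+o(1))$ factors, which is not allowed here. I would instead get a hard upper bound on the central coefficient directly — for instance by writing it as $\frac{1}{2\pi}\int_{-\pi}^{\pi} \left|\frac{\sin(n\theta/2)}{\sin(\theta/2)}\right|^d d\theta$ (a Dirichlet-kernel integral) and bounding $\left|\frac{\sin(n\theta/2)}{\sin(\theta/2)}\right| \le n e^{-c n^2\theta^2}$ for $|\theta|\le\pi$ with an explicit $c$ (using $|\sin x| \le |x|$ upstairs and a convexity bound $|\sin(\theta/2)| \ge |\theta|/\pi$ downstairs, refined), then performing the Gaussian integral to extract the $\sqrt{2/d}$ constant. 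Alternatively, a purely combinatorial induction on $d$ — bounding the central coefficient of $(1+\cdots+q^{n-1})^d$ in terms of that of $(1+\cdots+q^{n-1})^{d-1}$ via a Cauchy--Schwarz or convolution-smoothing step — may give the constant more transparently. Either way, pinning down that the constant works out to at most $\sqrt{2/d}$ (and not, say, $\sqrt{3/d}$) is the one genuinely delicate computation; everything else is either classical poset theory or monotonicity of binomial-type coefficients.
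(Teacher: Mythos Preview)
Your reduction to the middle rank is the same as the paper's: both identify the cube as a product of chains, hence rank-symmetric rank-unimodal strongly Sperner (the paper cites Proctor--Saks--Sturtevant for closure under products; your symmetric-chain-decomposition argument gives the same conclusion), so the width equals the size of the central level. Where you diverge is in bounding that central level. The paper does \emph{not} estimate the coefficient of $(1+q+\cdots+q^{n-1})^d$ analytically. Instead it argues geometrically: around each lattice point $y$ in the central level it places a box $S_y$ of $(d-1)$-volume $\sqrt{d}$ lying on the hyperplane $\sum x_i=0$; these boxes are disjoint and sit inside the central slice of a cube of edge $n+d-2$ (the extra $d-1$ comes from the last coordinate drifting by up to $(d-1)/2$). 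Then Ball's theorem --- any central hyperplane section of a unit cube has volume at most $\sqrt{2}$ --- gives the bound in one stroke.

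Two remarks on your route. First, your intermediate claim that the middle coefficient is at most $(n-1)^{d-1}\sqrt{2/d}$ is false, not merely loose: for $n=2$ the middle coefficient of $(1+q)^d$ is $\binom{d}{\lfloor d/2\rfloor}$, while $(n-1)^{d-1}\sqrt{2/d}=\sqrt{2/d}<1$. So the $n+d-2$ in the lemma is not slack you can ``spend'' later; it is genuinely needed for small $n$, and your local-CLT heuristic breaks down there. Second, the Dirichlet-kernel integral you propose, $\frac{1}{2\pi}\int_{-\pi}^{\pi}\bigl|\sin(n\theta/2)/\sin(\theta/2)\bigr|^d\,d\theta$, is essentially the object Ball's theorem is \emph{about}: the volume of a hyperplane section of the cube is a sinc-power integral, and extracting the sharp constant $\sqrt{2}$ from it is precisely Ball's (nontrivial) result. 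So your plan is viable but amounts to reproving Ball rather than citing it; the paper's geometric packaging outsources exactly the delicate constant computation you flagged as the obstacle.
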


\begin{proof}%%%s is different here from what it is elsewhere in the paper. What do I do about this?? Change $s$ elsewhere in the paper???
First, we need some definitions.

The \emph{width} of a poset is the size of its largest antichain.
If $P$ is a finite poset, we say that $P$ is \emph{ranked} if there exists a function $\rho:P\to \Z$ satisfying $\rho(y)=\rho(x)+1$ if $y$ covers $x$ in $P$ (i.e.\ $y>x$, and there is no $z\in P$ with $y>z>x$). If $\rho(x)=i$, then $x$ is said to have \emph{rank i}.
Let $P_i$ denote the set of elements of $P$ of rank $i$. We say $P$ is \emph{rank-symmetric rank-unimodal} if there exists some $c\in\Z$ with $|P_i|\le|P_{i+1}|$ when $i<c$ and $|P_{2c-i}|=|P_i|$ for all $i\in \Z$.
A ranked poset $P$ is called \emph{strongly Sperner} if for any positive integer $s$, the largest subset of $P$ that has no $(s+1)$-chain is the union of the $s$ largest $P_i$.

Proctor, Saks, and Sturtevant~\cite{ProctorSturtevant1980} prove that the class of rank-symmetric rank-unimodal strongly Sperner posets is closed under products.

Since a linear ordering of length $n$ is rank-symmetric rank-unimodal strongly Sperner, so is a product of $d$ of them (the lattice cube).

Center the cube on the origin by translation in $\R^d$. Let $U$ be the set of elements whose coordinates sum to 0. Since the poset is rank-symmetric rank-unimodal strongly Sperner, its width is at most the size of $P_c$, which is $|U|$. 

For each $y=(y_1,\ldots,y_d)\in U$, let $S_y$ be the set of points $(x_1,\ldots,x_d)$ with $|x_i-y_i|<\half$ for $1\le i\le d-1$ (note that this does not include the last index) which lie on the hyperplane given by $x_1+\cdots+x_d=0$. If $y,z$ are distinct elements of $U$, then $S_y$ and $S_z$ are clearly disjoint. Also, the projection of $S_y$ onto the hyperplane given by $x_d=0$ is a unit $(d-1)$-dimensional hypercube, which has volume 1. Thus the volume of $S_y$ is $\sqrt{d}$ and the volume of $\bigcup_{y\in U}S_y$ is $|U|\sqrt{d}$.

On the other hand, if $(x_1,\ldots,x_d)\in S_y$, then $|x_i-y_i|<\half$ for $1\le i\le d-1$ and $|x_d-y_d|\le\sum_{i=1}^{d-1}|x_i-y_i|<\half(d-1)$. Thus $(x_1,\ldots,x_d)$ lies in the cube of edge length $(n-1)+(d-1)=n+d-2$ centered at the origin. Therefore $\bigcup_{y\in U}S_y$ lies in the intersection of a cube of edge length $n+d-2$ with a hyperplane through its center (the origin).

Ball~\cite{Ball1986} shows that the volume of the intersection of a unit hypercube of arbitrary dimension with a hyperplane through its center is at most $\sqrt{2}$. Therefore the volume of 
$\bigcup_{y\in U}S_y$ is at most $(n+d-2)^{d-1}\sqrt{2}$, so
\[
|U|\le (n+d-2)^{d-1}\sqrt{\frac{2}{d}}.
\]
\end{proof}

Let $X=\{x_1<\cdots<x_n\}$ be any set of positive integers. If $B,C\in \binom{X}{d}$, then we say that $B\le C$ if we can write $B=\{b_1,\ldots,b_d\}$ and $C=\{c_1,\ldots,c_d\}$ with $b_i\le c_i$ for all $1\le i\le d$. Whenever we compare subsets of $A$, we will be using this partial order.

\begin{lemma}\label{simplex}
Fix $d>1$. For $n$ sufficiently large, the width of the partial order defined above is less than $\frac{2}{\sqrt{d}}\frac{1}{n}\left|\binom{X}{d}\right|$.
\end{lemma}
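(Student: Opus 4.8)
The plan is to reduce the poset $\binom{X}{d}$ (with the partial order just defined) to the lattice-cube poset of the previous lemma, so that the antichain bound transfers. Given $B = \{b_1 < \cdots < b_d\}$ and $C = \{c_1 < \cdots < c_d\}$ in $\binom{X}{d}$, I claim $B \le C$ in the sense defined above if and only if $b_i \le c_i$ for every $i$ when both sets are listed in increasing order. (One direction is immediate; for the other, a standard exchange/sorting argument shows that if some matching $b_{\sigma(i)} \le c_i$ exists, then the sorted matching also works.) Thus the map sending $B = \{b_1 < \cdots < b_d\}$ to the vector of ranks $(r_1, \ldots, r_d)$, where $b_i = x_{r_i}$, is an order-embedding of $\binom{X}{d}$ into $\Z^d$; its image is exactly the set of strictly increasing vectors $1 \le r_1 < r_2 < \cdots < r_d \le n$.

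Next I would apply the standard change of variables $t_i = r_i - i$, which is an order-isomorphism from the strictly-increasing vectors onto the set of weakly-increasing vectors $0 \le t_1 \le t_2 \le \cdots \le t_d \le n-d$. This sits inside the lattice cube $\{0,1,\ldots,n-d\}^d$, which has $n-d+1$ lattice points per edge, and the embedding is order-preserving in both directions. Hence any antichain in $\binom{X}{d}$ maps to an antichain in that lattice cube, so by the previous lemma the width of $\binom{X}{d}$ is at most $(n-d+1+d-2)^{d-1}\sqrt{2/d} = (n-1)^{d-1}\sqrt{2/d}$.

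It remains to compare this to $\frac{2}{\sqrt d}\cdot\frac{1}{n}\bigl|\binom{X}{d}\bigr| = \frac{2}{\sqrt d}\cdot\frac{1}{n}\binom{n}{d}$. Since $\binom{n}{d} = \frac{n(n-1)\cdots(n-d+1)}{d!} \ge \frac{(n-d+1)^d}{d!}$, for fixed $d$ we have $\frac{1}{n}\binom{n}{d} \ge \frac{(n-d+1)^d}{n\cdot d!}$, which grows like $n^{d-1}/d!$, whereas $(n-1)^{d-1}$ also grows like $n^{d-1}$ but the factor $d!$ in the denominator of the binomial is absorbed once $n$ is large. More carefully, one checks that $\frac{(n-1)^{d-1}\sqrt{2/d}}{\frac{2}{\sqrt d}\frac1n\binom nd} = \frac{(n-1)^{d-1} n\, d!}{2\,n(n-1)\cdots(n-d+1)} \to \frac{d!}{2} \cdot \lim \frac{(n-1)^{d-1}}{(n-1)\cdots(n-d+1)}$; since the last limit is $1$ but we need the ratio to be $<1$, the clean approach is instead to note $\binom nd \ge \frac{n}{d}\binom{n-1}{d-1}$ and bound $\binom{n-1}{d-1} \ge (n-1)^{d-1}/(d-1)!$ is too weak — so I would instead directly use $\binom nd / n = \binom{n-1}{d-1}/d \ge \frac{(n-d+1)^{d-1}}{d!}$ and observe that $(n-1)^{d-1} \cdot \sqrt{2/d} < \frac{2}{\sqrt d}\cdot\frac{(n-d+1)^{d-1}}{d!}$ fails for the naive bound, which signals that the right move is to retain more precision in the antichain estimate.

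The main obstacle is exactly this last numerical comparison: the antichain bound $(n-1)^{d-1}\sqrt{2/d}$ and the target $\frac{2}{\sqrt d}\frac1n\binom nd$ are both $\Theta(n^{d-1})$, so the inequality is genuinely asymptotic and hinges on the constant. The resolution is that $\frac1n\binom nd = \frac1d\binom{n-1}{d-1}$ and $\binom{n-1}{d-1} \sim \frac{(n-1)^{d-1}}{(d-1)!}$, so the target is asymptotic to $\frac{2}{\sqrt d}\cdot\frac{(n-1)^{d-1}}{d!}$ — which is \emph{smaller} than the antichain bound by a factor $d!/\!\sqrt2$, meaning the embedding into the full cube is too lossy. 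Therefore the real work is to sharpen the first lemma's input: instead of embedding into the cube $\{0,\ldots,n-d\}^d$, one should apply the Proctor–Saks–Sturtevant machinery (rank-symmetric rank-unimodal strongly Sperner, closed under products) directly, or apply the first lemma's volume argument to the order ideal that is the image of the $t_i$-coordinates, exploiting that this image is the "staircase" region $t_1 \le \cdots \le t_d$ of relative volume $1/d!$ inside the cube. Carrying the factor $1/d!$ through Ball's hyperplane-section bound yields width $\le \frac{(n-1)^{d-1}}{d!}\sqrt{2d}$ (heuristically), and then $\frac{(n-1)^{d-1}}{d!}\sqrt{2d} = \sqrt{2d}\cdot\frac{(n-1)^{d-1}}{d!} < \frac{2}{\sqrt d}\cdot\frac{(n-1)^{d-1}}{(d-1)!} \le \frac{2}{\sqrt d}\cdot\frac1n\binom nd$ for $n$ large, since $\sqrt{2d}/d! \cdot d \cdot \sqrt{d}/2 = \sqrt{2}\,d^{3/2}/(2\cdot d!) \to 0$. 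So the plan is: (1) establish the order-embedding via sorted matchings and the $t_i = r_i - i$ substitution; (2) identify the image as the staircase in the cube and re-run the first lemma's packing-and-section argument on that staircase, picking up the $1/d!$; (3) finish with the elementary comparison to $\frac1n\binom nd$, which now has room to spare for all sufficiently large $n$.
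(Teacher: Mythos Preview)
Your diagnosis is right: embedding $\binom{X}{d}$ into a single cube via the sorted-rank (or $t_i=r_i-i$) map loses exactly a factor of $d!$, and without that factor the inequality you want is false at the level of constants. But your proposed repair --- ``re-run the first lemma's packing-and-section argument on the staircase, picking up the $1/d!$'' --- is not a proof. Ball's theorem is about hyperplane sections of a cube; you would need an analogous sharp bound for hyperplane sections of the order-simplex $\{t_1\le\cdots\le t_d\}$, and you neither state nor prove one. The constants you then write down (``$\sqrt{2d}$ heuristically'') do not follow from anything you have established, and the final chain of inequalities is left as a hope rather than checked. As written, the argument has a genuine gap at precisely the point where the $d!$ must be recovered.

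The paper recovers the $d!$ by a one-line trick that avoids any new geometric input. Rather than embedding $\binom{X}{d}$ into the cube, it \emph{lifts the antichain}: given a maximum antichain $U\subset\binom{X}{d}$, set
\[
U'=\bigl\{(y_1,\ldots,y_d)\in X^d : \{y_1,\ldots,y_d\}\in U\bigr\},
\]
i.e.\ take all $d!$ orderings of each set in $U$. The key observation is that $U'$ is still an antichain in the product order on $X^d$: if $(y_1,\ldots,y_d)<(z_1,\ldots,z_d)$ coordinatewise, then by definition $\{y_i\}\le\{z_i\}$ in the poset on $\binom{X}{d}$, and $\sum y_i<\sum z_i$ forces $\{y_i\}\ne\{z_i\}$, contradicting that both lie in $U$. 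Hence $d!\,|U|=|U'|\le(n+d-2)^{d-1}\sqrt{2/d}$ by the previous lemma applied to the full $n$-cube. Dividing by $\binom{n}{d}=n(n-1)\cdots(n-d+1)/d!$ cancels the $d!$, and for large $n$ the remaining ratio $(n+d-2)^{d-1}\big/\bigl(n(n-1)\cdots(n-d+1)\bigr)$ is below $\sqrt{2}/n$, giving the claim. This is the idea you were missing.
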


\begin{proof}
Let $U$ be a maximum antichain of the partial order. Take the partial order of $X^d$, which coincides with the cube partial order. Let $U'=\{(y_1,\ldots,y_d)\in X^d\mid \{y_1,\ldots,y_d\}\in U\}$. Note that this means, in particular, that all elements of any $k$-tuple in $U'$ are distinct. If $(y_1,\ldots,y_d),(z_1,\ldots,z_d)\in U'$ with $(y_1,\ldots,y_d)<(z_1,\ldots,z_d)$, then we get that $\{y_i\}\le\{z_i\}$ and $\sum_{i=1}^d y_i< \sum_{i=1}^d z_i$, so $\{y_i\}\ne\{z_i\}$, so $\{y_i\}<\{z_i\}$, which is impossible. Thus $U'$ is an antichain of $X^d$ of size $d!|U|$ and 
\[
|U|\le\frac{1}{d!}(n+d-2)^{d-1}\sqrt{\frac{2}{d}}.
\]
Then $\left|\binom{X}{d}\right|=\binom{n}{d}$ gives us
\[
\frac{|U|}{|\binom{X}{d}|}\le \frac{(n+d-2)^{d-1}\sqrt{\frac{2}{d}}}{n(n-1)\dotsm(n-d+1)}.
\]
For sufficiently large $n$, %%Should I say here what "sufficiently large" means
$\left(\frac{n+d-2}{n-d+1}\right)^{d-1}<\sqrt{2}$, so $\frac{|U|}{|\binom{X}{d}|}<\frac{2}{\sqrt{d}}\frac{1}{n}$.
\end{proof}

Let $d(n)$ denote the number of divisors of $n$.

\begin{lemma} \label{divisors}
For any positive integer $k$, $d(n)=O(n^\frac{1}{k})$.
\end{lemma}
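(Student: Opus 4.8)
The plan is to exploit the multiplicativity of both $d(n)$ and the function $n\mapsto n^{1/k}$. Writing the prime factorization $n=\prod_i p_i^{a_i}$, we have $d(n)=\prod_i(a_i+1)$, so it suffices to bound the product
\[
\frac{d(n)}{n^{1/k}}=\prod_i\frac{a_i+1}{p_i^{a_i/k}}
\]
by a constant depending only on $k$. The idea is to show that all but finitely many of the prime factors contribute a factor at most $1$, while the remaining finitely many contribute a bounded amount.

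First I would handle the ``large'' primes. If $p\ge 2^k$, then $p^{1/k}\ge 2$, so $p^{a/k}=(p^{1/k})^a\ge 2^a\ge a+1$ for every integer $a\ge 0$ (the inequality $2^a\ge a+1$ being immediate by induction). Hence for every prime $p\ge 2^k$ the corresponding factor $\frac{a+1}{p^{a/k}}$ is at most $1$, and these primes can only decrease the product.

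Next I would handle the ``small'' primes $p<2^k$, of which there are only finitely many. For each such $p$, set $t=p^{1/k}>1$; since $\frac{a+1}{t^a}\to 0$ as $a\to\infty$, the supremum $C_p:=\sup_{a\ge 0}\frac{a+1}{p^{a/k}}$ is attained and finite. Taking $C_k:=\prod_{p<2^k}C_p$, which is a finite constant depending only on $k$, we obtain
\[
\frac{d(n)}{n^{1/k}}\le\prod_{p<2^k}C_p\cdot\prod_{p\ge 2^k}1=C_k,
\]
so $d(n)\le C_k\,n^{1/k}$, as desired. There is no real obstacle here; the only point requiring a word of care is the uniform bound $C_p$ for the small primes, and that is immediate from the fact that $\frac{a+1}{p^{a/k}}$ tends to $0$ as $a\to\infty$ and thus attains its maximum over the nonnegative integers.
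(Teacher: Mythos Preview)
Your proof is correct and follows essentially the same approach as the paper: both exploit multiplicativity, show that primes $p\ge 2^k$ contribute a factor at most $1$ via $2^a\ge a+1$, and bound the contribution of the finitely many primes below $2^k$ by a constant depending only on $k$. The only cosmetic difference is that the paper uses a single constant $C$ valid for all small primes (and then the crude bound $C^{2^k}$), whereas you take the product of the individual $C_p$.
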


\begin{proof}
There are finitely many primes $p<2^k$, so there must be some constant $C$ such that for any $p<2^k$ and any positive integer $m$, $d(p^m)=m+1\le C(p^m)^\frac{1}{k}$.

For $p>2^k$, $d(p^m)=m+1\le 2^m\le (p^m)^\frac{1}{k}$. Thus if $n=\prod_{i=1}^j p_i^{m_i}$ for distinct prime $p_i$, then
\[
d(n)=\prod_{i=1}^j d\left(p_i^{m_i}\right)\le C^{2^k}\prod_{i=1}^j \left(p_i^{m_i}\right)^\frac{1}{k}\le C^{2^k}n^\frac{1}{k}=O\left(n^\frac{1}{k}\right).
\]
\end{proof}

\begin{lemma} \label{fracsum}
Fix positive integers $k, m, a, b$. Then for positive integers $n$, the number of pairs of positive integers $(x,y)$ such that $\frac{m}{n}=\frac{a}{x}+\frac{b}{y}$ and all three fractions are in lowest terms is at most $O(n^\frac{1}{k})$.
\end{lemma}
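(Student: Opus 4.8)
The plan is to clear denominators and complete the resulting bilinear equation to a product, which turns the count into a divisor count. Starting from $\frac{m}{n}=\frac{a}{x}+\frac{b}{y}$, I would first rewrite it as $mxy=n(ay+bx)$, then multiply through by $m$ and add $n^2ab$ to both sides to obtain
\[
(mx-na)(my-nb)=m^2xy-mn(ay+bx)+n^2ab=n^2ab .
\]
So every pair $(x,y)$ solving the original equation yields a factorization of $n^2ab$ into the two integers $mx-na$ and $my-nb$.

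Next I would fix the signs of those factors. If $mx-na\le 0$, then $\frac{a}{x}\ge\frac{m}{n}$, forcing $\frac{b}{y}\le 0$, which is impossible for positive $b,y$; hence $mx-na>0$, and symmetrically $my-nb>0$. Thus $u:=mx-na$ ranges over the positive divisors of $n^2ab$, and since $x=\frac{u+na}{m}$ and $y=\frac{n^2ab/u+nb}{m}$ are recovered from $u$, the map $(x,y)\mapsto u$ is injective. Consequently the number of solution pairs — in particular the number of those in which all three fractions are in lowest terms, since that condition only restricts the set further — is at most $d(n^2ab)$.

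Finally I would apply Lemma~\ref{divisors} with $2k$ in place of $k$, giving $d(N)=O(N^{1/(2k)})$; taking $N=n^2ab$ with $a,b$ fixed yields $d(n^2ab)=O\bigl((n^2ab)^{1/(2k)}\bigr)=O(n^{1/k})$, the constant $(ab)^{1/(2k)}$ being absorbed into the $O$. I do not expect a real obstacle here: the one thing to notice is the completion-to-a-product identity after scaling by $m$, and everything afterwards is routine bookkeeping; as a bonus, the lowest-terms hypotheses play no role in this upper bound.
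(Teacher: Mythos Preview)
Your proof is correct and takes a genuinely different route from the paper. You complete the bilinear relation to a product, obtaining $(mx-na)(my-nb)=n^{2}ab$, and then bound the number of solutions by $d(n^{2}ab)=O(n^{1/k})$ via Lemma~\ref{divisors} with exponent $2k$. The paper instead introduces $p=\gcd(n,x)$, writes $n=tp$, $x=wp$, and sets $q=\gcd(mw-at,\,twp)$; using that $b/y$ is in lowest terms it extracts the linear equation $mw-at=qb$, so each pair $(p,q)$ determines at most one solution, and since $p,q\mid n$ the count is at most $d(n)^{2}=O(n^{1/k})$.

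Your argument is the classical ``Simon's favorite factoring'' approach for equations of the type $\tfrac{1}{x}+\tfrac{1}{y}=\tfrac{1}{n}$; it is shorter and, as you observe, never uses the lowest-terms hypotheses. The paper's gcd parametrization genuinely relies on $b/y$ being in lowest terms to pin down the relation $mw-at=qb$, so your version is slightly more general as well. Both approaches land on the same final bound through Lemma~\ref{divisors}.
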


\begin{proof}
Assume $\frac{m}{n}=\frac{a}{x}+\frac{b}{y}$. Let $p=\gcd(n,x)$, with $n=tp$ and $x=wp$. Then 
\[
\frac{b}{y}=\frac{m}{n}-\frac{a}{x}=\frac{mw-at}{twp}.
\]
Letting $q=\gcd(mw-at,twp)$, we get 
\begin{equation}\label{eq:getw}
mw-at=qb.
\end{equation}
For any choice of $n,p,q$, \eqref{eq:getw} gives at most one possible value of $w$, thus at most one value of $x$, and thus at most one value of $(x,y)$.

The definition of $q$ gives us $q\mid p$. Then for a given $n$, both $p$ and $q$ are divisors of $n$, so by Lemma~\ref{divisors} there are $O(n^{\frac{1}{2k}})$ possible values for $p$ and $O(n^{\frac{1}{2k}})$ values for $q$, so there are $O(n^{\frac{1}{k}})$ values for $(p,q)$ and $O(n^{\frac{1}{k}})$ pairs of numbers $(x,y)$.

\end{proof}

\section{Proof of Theorem~\ref{generalcase}}
Assume that $|A|=n$, $d^s_k(A)\ge\binom{n-1}{k}$, and that $A$ is not a $(k,s)$-anti-pencil. Note that then some $B\ni a_n$ has $\sum B\le\frac{s}{s+1}$, so since $1<k$, we have $a_n<\frac{s}{s+1}$. We will use this in all the cases below.  Also, the number of $k$-subsets of $A$ that are not $s$-divisors is at most $\binom{n}{k}-\binom{n}{k-1}=\binom{n-1}{k-1}$.

\begin{remark}\label{chains}
If $B$ and $C$ are $k$-subsets of $A$ with $B<C$, then $\sum B<\sum C$. Note that if $B_0<B_1<\cdots<B_m$ are all divisors of $A$ and $\sum B_m<s/q$, then $\sum B_0<s/(q+m)$. Therefore if $a\in B_0$, then $a<s/(q+m)$. Since $k<n$, $\sum B_m<s/s$, so we automatically get that $a<s/(s+m)$ 
\end{remark}

Each of the subsections below is a separate case.

\subsection{\texorpdfstring{$k$}{k} small}
Fix $2\le k$ and let $n>>k$.

For $1\le i_1,\ldots,i_k\le n$, call the ordered $k$-tuple $(i_1,\ldots,i_k)$ \emph{repetitive} if not all entries are distinct. Call it \emph{good} if all entries are distinct and $\{a_{i_j}\}$ is an $s$-divisor. Otherwise, call the ordered $k$-tuple \emph{bad}.

We will first restrict our attention to $k$-tuples where $i_k\ge n-1$. Among these, $O(n^{k-2})$ are repetitive. Also, $O(n^{k-2})$ include both $n$ and $n-1$ among their components. Of the remainder, at most $(k-1)!\binom{n-1}{k-1}\le n^{k-1}$ are bad. Thus at least 1/3 of the $k$-tuples $(i_1,\ldots,i_k)$ satisfying $i_k\ge n-1$ are good.

%JUST INVOKE PIGEONHOLE
%If we randomly choose $1\le j_2,\ldots,j_{k-1}\le n$ and $n-1\le j_{k-1}\le n$, the expected number of good tuples in the chain $\{(i_1,j_2,\ldots,j_k)\}\subset U$ is at least $n/3$. Therefore,

By the Pigeonhole Principle,
there are some values $j_2,\ldots,j_k$ with $j_k\ge n-1$ such that the chain $\{(1,j_2,\ldots,j_k),\ldots,(n,j_2,\ldots,j_k)\}\subset U$ has at least $n/3$ good $k$-tuples. This gives us a chain of $k$-subset $s$-divisors of length at least $n/3$. Thus $a_{n-1}\le\frac{3s}{n}$.

Let $B=\{a_i\mid i>\left(1-\frac{1}{9s^2}\right)n\}$. If $a_i\in B$, then
\[
1=\sum A=\sum_{i=1}^n a_i<na_i+\frac{n}{9s^2}a_{n-1}+a_n<na_i +\frac{1}{3s}+\frac{s}{s+1}
\]
so $na_i>\frac{1}{6s}$ and $a_i>\frac{1}{6sn}$.

Thus any $s$-divisors that is a subset of $B$ must sum to some $\frac{s}{m}>\frac{1}{6sn}$, so there are at most $6s^2n$ distinct values that $m$ can take. Thus there are at most $6s^2n$ distinct values that an $s$-divisor that is a subset of $B$ can sum to.

If $D\in\binom{B}{k-2}$ and $r=\frac{s}{m}$ for some positive integer $m$, call $D$ an \emph{r-stem} if there are at least $\frac{1}{10000s^6}n$ pairs $\{x,y\}\subset B\wo D$ with $\sum (D\cup\{x,y\})=r$. Call such pairs \emph{tails} of $D$. If two tails of $D$ are $\{x,y\}$ and $\{x,z\}$, then the sum condition gives us $y=z$, so tails of $D$ are pairwise disjoint.

Now let $B_0=B$. Note that $|B_0|>\frac{1}{10s^2}n$. As long as $|B_{i-1}|\ge\frac{1}{20s^2}n$, $B_{i-1}$ has at most $\binom{n-1}{k-1}$ subsets which are not $s$-divisors of $A$, so it has at least $\half\binom{|B_{i-1}|}{k}$ $k$-subsets that are $s$-divisors. Since these take on at most $6s^2n$ values, there must be some positive integer $m_i$ such that at least $\frac{1}{12s^2n}\binom{|B_{i-1}|}{k}$ $k$-subsets of $B_{i-1}$ sum to $r_i=\frac{s}{m_i}$.

%averaging again
If we randomly choose $D_i\in\binom{B_{i-1}}{k-2}$, the expected value for the number of pairs $\{x,y\}\subset B_{i-1}\wo D_i$ with $\sum(D_i\cup\{x,y\})=r_i$ is at least $\frac{1}{12s^2n}\binom{|B_{i-1}|-(k-2)}{2}$. Thus we will choose a $D_i$ such that the number of these pairs is at least $\frac{1}{12s^2n}\binom{|B_{i-1}|-(k-2)}{2}$.
Since  %%using $n$ large,
\[
\frac{1}{12s^2n}\binom{|B_{i-1}|-(k-2)}{2}\ge \frac{1}{25s^2n}\left(|B_{i-1}|\right)^2\ge \frac{1}{25s^2n(20s^2)^2}n^2\ge \frac{1}{10000s^6}n,
\]
$D_i$ satisfies the definition of an $r_i$-stem.

Let $B_i=B_{i-1}\wo D_i$. Then for $i\le\frac{1}{20ks^2}n$, $D_i$ is an $r_i$-stem and all the $D_i$ are disjoint.

Since the number of $k$-subsets of $A$ which are not $s$-divisors is less than $\binom{\frac{1}{20ks^2}n}{k}$, we know that there must exist disjoint $D_{i_1},\ldots,D_{i_k}$ such that any set consisting of one element of each $D_{i_j}$ will be an $s$-divisor. Note that in the $k=2$ case, $D_{i_1}=D_{i_2}=\emptyset$. Partition $\bigcup_{j=1}^k D_{i_j}$ into $k-2$ such sets $C_1,\ldots,C_{k-2}$.

%%%NOW WE NEED TO WEED THE TAILS
Let $p=\ceil{\frac{1}{10000s^6}n/(2k)}=\ceil{\frac{1}{20000s^6k}n}$. For $1\le j\le k$, we want to choose $T^j_1,\ldots,T^j_p$ to be tails of $D_{i_j}$. We will choose them for $j=1$, then for $j=2$, and so on. When we choose $\{T^j_\ell\}$, we will make each of these tails disjoint from each of the $k$ stems, as well as from the already chosen tails. This is possible since
\[
\left|\bigcup_{h=1}^k D_{i_h}\cup \bigcup_{h=1}^{j-1}  \bigcup_{\ell=1}^p T^h_\ell\right|=\left|\bigcup_{h=1}^k D_{i_h}\right|+\sum_{h=1}^{j-1} \left|\bigcup_{\ell=1}^p T^h_\ell\right|=k(k-2)+2(j-1)p\le k(k-2)+2(k-1)p.
\]
Since any element in a stem or in a previously chosen tail can be in at most one tail of $D_{i_j}$, at most $k(k-2)+2(k-1)p$ tails are eliminated, so there must be at least $p$ tails still available to choose from. %%should I make the calculation explicit??? Should I note that I'm usings "suffiently large" n???

We say that a choice of $k$ tails $\{T^j_{i_j}\}_{j=1}^k$ for each stem is \emph{fortuitous} if $\{x^j_{i_j}\}_{j=1}^k$ and $\{y^j_{i_j}\}_{j=1}^k$ are both $s$-divisors.
There are $p^k>n^k/(20000s^6k)^k$ choices of tails, and at most $\binom{n-1}{k-1}$ of them are not fortuitous.
Thus at least $\half$ of possible choices are fortuitous.

%averaging again--USE PIGEONHOLE
%If we randomly choose $i_1,\ldots,i_{k-1}$, the expected number of choices of $i$ such that $\{T^1_{i_1},\ldots,T^{k-1}_{i_{k-1}},T^k_i\}$ is fortuitous is at least $p/2$.
By the Pigeonhole Principle, we can choose  $i_1,\ldots,i_{k-1}$ so that there are at least $p/2$ choices for $i$ which make $\{T^1_{i_1},\ldots,T^{k-1}_{i_{k-1}},T^k_i\}$ fortuitous.

Note that different choices of $i$ give us different values of $x^k_i$ and therefore different values of $\sum_{j=1}^k x^j_{i_j}$, so $\sum_{j=1}^k x^j_{i_j}$ can take on at least
\[
p/2=\Omega(n)
\]
different values.

On the other hand, if we are given a fortuitous choice of tails $\{T^j_{i_j}\}$, then
\bal
\sum_{j=1}^{k-2}\sum C_j +\sum_{j=1}^k x^j_{i_j} +\sum_{j=1}^k y^j_{i_j}&=\sum_{j=1}^{k}\sum \left(D_{i_j}\cup\{x^j_{i_j},y^j_{i_j}\}\right)\\
\sum_{j=1}^k x^j_{i_j} +\sum_{j=1}^k y^j_{i_j}&=\sum_{j=1}^{k}r_{i_j}-\sum_{j=1}^{k-2}\sum C_j.
\eal
The right hand side does not depend on our choice of tails. Also, since each $r_{i_j}$ and each $\sum C_j$ has denominator at most $6s^2n$, the right hand side has denominator at most $(6s^2n)^{2k}$. Since both $\sum_{j=1}^k x^j_{i_j}$ and $\sum_{j=1}^k y^j_{i_j}$ are $s$-divisors, there are at most $s^2$ possibilities for their numerators. For each such possibility, by Lemma~\ref{fracsum}, $\sum_{j=1}^k x^j_{i_j}$ can take on at most
\[
O\left(\left(6s^2n)^{2k}\right)^\frac{1}{4k}\right)=O\left(sn^\half\right)
\]
different values. Thus $\sum_{j=1}^k x^j_{i_j}$ can take on at most
\[
O\left(s^3n^\half\right)
\]
different values, contradicting the upper bound above.

\subsection{\texorpdfstring{$n\ge\frac{3}{2}k$}{n>=3/2k}, \texorpdfstring{$k$}{k} sufficiently large}
Let $d=\ceil{\left(s(s+1)/0.03\right)^2}$.
Assume that $k$ is sufficiently large relative $d$ %%should specify what that means, used for $k\ge d$, in the <0.76, and in invoking the "n sufficiently large" clause of the lemma
and that $n\ge\frac{3}{2}k$. 

Let $T_2$ be the set of $k$-subsets of $A$ that include both $a_{n-1}$ and $a_n$. Let $T_1$ be the set of $k$-subsets of $A$ that include one of $a_{n-1}$ or $a_n$, but not both. Define $U_1$ and $U_2$ similarly, but with $(k-d)$-subsets.

For $S\in U_t$, let $P_S=\{B\in T_t\mid S\subset B\}$ (the set of $k$-subsets obtainable by adding $d$ elements of $A$ less than $a_{n-1}$ to $S$).
Note that an element of $T_t$ is contained in $P_S$ for exactly $\binom{k-t}{d}$ values of $S$. %Should I justify this?
Thus if $\alpha|T_t|$ elements of $T_t$ are $s$-divisors, then there is some $S\in U_t$ so that at least $\alpha|P_S|$ elements of $P_S$ are $s$-divisors.

Now note that the disjoint union $T_1\cup T_2$ is the set of all $k$-subsets whose greatest element is at least $a_{n-1}$, so
\[
|T_1\cup T_2|=\binom{n}{k}-\binom{n-2}{k}
\]
and the fraction of the elements of $T_1\cup T_2$ which are not $s$-divisors is at most
\bal
\frac{\binom{n}{k}-\binom{n-1}{k}}{\binom{n}{k}-\binom{n-2}{k}}&=\frac{1}{\frac{\binom{n}{k}-\binom{n-2}{k}}{\binom{n-1}{k-1}}}\\
                                                               &=\frac{1}{\frac{n}{k}-\frac{\binom{n-2}{k}}{\binom{n-1}{k-1}}}\\
                                                               &=\frac{1}{\frac{n(n-1)}{k(n-1)}-\frac{(n-k)(n-k-1)}{k(n-1)}}\\
																															 &=\frac{k(n-1)}{n(n-1)-(n-k)(n-k-1)}\\
																															 &=\frac{k(n-1)}{k(2n-1-k)}\\
																															 &=\frac{n-1}{2n-k-1}\\% here I use that $n$ is big
																															 &\le 0.76
\eal
for sufficiently large $k$. Therefore, if we set $\alpha=0.24$, then for $t=1$ or $t=2$, the fraction of elements of $T_t$ that are $s$-divisors is at least $\alpha$, so for some $S$, the fraction of elements of $P_S$ which are $s$-divisors is at least $\alpha=0.24$.

Note that the partial order of $P_S$ is the same as the partial order of $\binom{A\wo S\wo\{a_{n-1},a_n\}}{d}$, so by Lemma~\ref{simplex}, its width is at most $\frac{2}{\sqrt{d}}\frac{1}{n-k-2}|P_S|$. Then, by Mirsky's theorem, there is a chain of $k$-subset $s$-divisors in $P_S$ of length at least
\[
\frac{\alpha|P_S|}{\frac{2}{\sqrt{d}}\frac{1}{n-k-2}|P_S|}=0.12\sqrt{d}(n-k-2)\ge (0.03\sqrt{d})n. % here I use that $n$ is big
\]
But then the first element of the chain includes $a_{n-1}$ or $a_n$, so by Remark~\ref{chains}, $a_{n-1}\le \frac{s}{(0.03\sqrt{d})n}$. Then
\[
\sum_{i=1}^{n-1} a_i\le \frac{s}{0.03\sqrt{d}}
\]
and, since $a_n<\frac{s}{s+1}$,
\[
\sum_{i=1}^n a_i<1
\]
yielding a contradiction.

\subsection{\texorpdfstring{$\frac{2}{3}n<k<n-\left(6s^2+3s\right)^2$}{2/3n<k<n-c}, \texorpdfstring{$k$}{k} sufficiently large}
Let $d=(6s^2+3s)^2$.
Assume that $k$ is sufficiently large and that $\frac{2}{3}n<k<n-d$.

Randomly arrange the elements of $A$ around a circle.
Let $M$ be the set of $k$-subsets of $A$ consisting of $k$ consecutive elements around the circle, and let $N=\{B\in M\mid \sum B\le\frac{1}{2(s+1)}\}$.
If $B, C\in N$ and they are shifted relative each other by at least $n-k-1$, then $|A\wo(B\cup C)|\le 1$, so 
\[\sum A\le\sum(A\wo(B\cup C))+\sum B+\sum C<\frac{s}{s+1}+\frac{1}{2(s+1)}+\frac{1}{2(s+1)}=1,
\]
which is impossible.

Thus any two elements of $N$ are shifted by at most $n-k-2$ around the circle. This gives us $|N|\le n-k-1$. Since any $k$-subset of $A$ summing to at most $\frac{1}{2(s+1)}$ has equal probability of being in $N$, this tells us that the number of $k$-subsets with sum at most $\frac{1}{2(s+1)}$ is at most $\frac{n-k-1}{n}\binom{n}{k}$. Thus there are at least
\[
\binom{n-1}{k}-\frac{n-k-1}{n}\binom{n}{k}=\frac{n-k}{n}\binom{n}{k}-\frac{n-k-1}{n}\binom{n}{k}=\frac{1}{n}\binom{n}{k}
\]
$k$-subsets which are $s$-divisors of $A$ and have a sum of elements greater than $\frac{1}{2(s+1)}$. The sum of elements of such a set is $\frac{s}{m}>\frac{1}{2(s+1)}$, so it can take on one of $2s(s+1)-s=2s^2+s$ values, so there must be some integer $m$ so that at least $\frac{1}{(2s^2+s)n}\binom{n}{k}$ of the $k$-subsets of $A$ sum to $\frac{s}{m}$. Thus at least $\frac{1}{(2s^2+s)n}\binom{n}{n-k}$ of the $(n-k)$-subsets of $A$ sum to $1-\frac{s}{m}$.

If $S\in\binom{A}{n-k-d}$, let $P_S$ be the set of $(n-k)$-subsets obtainable by adding $d$ elements of $A$ to $S$.
Note that any $(n-k)$-subset of $A$ is contained in $P_S$ for exactly $\binom{n-k}{d}$ values of $S$, so there is some $S$ so that at least
\[
\frac{1}{(2s^2+s)n}|P_S|
\]
elements of $P_S$ sum to $1-\frac{s}{m}$. %Should I justify this?
They must then form an antichain.

However, the partial order of $P_S$ is the same as the partial order of $\binom{A\wo S}{d}$, so by Lemma~\ref{simplex}, its largest antichain has size less than
\[
\frac{2}{\sqrt{d}}\frac{1}{k+d}|P_S|<\frac{3}{n\sqrt{d}}|P_S|\le\frac{1}{(2s^2+s)n}|P_S|,
\]
yielding a contradiction.

\subsection{\texorpdfstring{$n-\left(6s^2+3s\right)^2\le k<n-1$}{n-C<=k<n-1}, \texorpdfstring{$k$}{k} sufficiently large}
Assume that $n-\left(6s^2+3s\right)^2\le k<n-1$. Let $u=n-k$. Thus $1<u\le \left(6s^2+3s\right)^2$, so $u$ can take on only finitely many values. Assume that $k$ is sufficiently large relative those values. Let
\[
Y=\left\{B\in\binom{A}{u}\;\middle|\; A\wo B\text{ is an }s\text{-divisor of } A\right\}.
\]
By assumption, $|Y|\ge\binom{n-1}{k}=\binom{n-1}{u-1}$.

Let $q$ be as small as possible so that $a_{n-q}<\frac{1}{u(s+1)}$. Note that $q<u(s+1)$. If $B\in\binom{A}{u}$ and $b\le a_{n-q}$ for all $b\in B$, then $\sum B<\frac{1}{s+1}$, so $\sum (A\wo B)>\frac{s}{s+1}$ and $B\notin Y$. Thus every $B\in Y$ contains at least one of the $q$ greatest elements of $A$.

The number of $u$-subsets of $A$ containing at least 2 of the $q$ greatest elements of $A$ is bounded by
\[
2^q\binom{n-q}{u-2}< 2^{u(s+1)}\binom{n}{u-2}<\half|Y|,
\]%%%here I use that $n$ is large
so at least half of the elements of $Y$ contain exactly one of the $q$ greatest elements of $A$. 

Thus there must be some $a_i$ which is one of the $q$ greatest elements of $A$ such that at least $\frac{1}{2u(s+1)}\binom{n-1}{u-1}$ elements of $Y$ include $a_i$ and no other of the $q$ largest elements.

If $B$ is such an element of $Y$, then
\[
\sum B<a_i+(u-1)\frac{1}{u(s+1)}<\frac{s}{s+1}+\frac{u-1}{u(s+1)}=1-\frac{1}{u(s+1)}.
\]
Since $\sum B$ must be of the form $1-\frac{s}{m}$ for some positive integer $m$, we get fewer than $s(s+1)u$ possible values of $m$. Thus there must be some value of $m$ so that there are at least
\[
\frac{1}{2u^2s(s+1)^2}\binom{n-1}{u-1}
\]
different $u$-subsets of $A$ which include $a_i$ and sum to $1-\frac{s}{m}$. However, if we have a collection of that many $u$-subsets of $A$ that contain $a_i$, then %%%here I use that $n$ is large
some 2 of them will share $u-1$ elements and thus have different sum. This gives us a contradiction.

\section{Conclusion}
For $k$ sufficiently large, all $n$ are covered by one of the three last cases. For $k$ small, all but finitely values of $n$ are covered by the first case.

In the statement of Theorem~\ref{s1case} and Theorem~\ref{generalcase}, ``all but finitely many'' cannot be omitted. For example, Huynh\cite{huynh14} notes that $n=4, k=2$, $A=\{\frac{1}{24},\frac{5}{24},\frac{7}{24},\frac{11}{24}\}$ gives $d_k(A)=4>\binom{n-1}{k}$. As $s$ increases, the number of such exceptions grows; in fact, it is easy to see that any $n, k, A$, will be an exception for sufficiently large $s$.

We could follow the proof and trace out the upper bounds on $n$ such that $(k,n)$ is an exception; however these will probably be far from optimal (for instance, for $s=1$, $(2,4)$ is likely the only exception). It would be interesting to get a good bound on the number of such exceptions, or on how large $n$ can be in terms of $s$ for $(k,n)$ to be an exception.

In this paper, we are counting $B\in\binom{A}{k}$ such that $\sum B=\frac{s}{m}$. If we instead counted $B$ such that $\sum B<\frac{k}{n}$, this problem becomes equivalent to the Manickam-Mikl\'{o}s-Singhi conjecture:
\begin{conjecture}
For positive integers $n, k$ with $n\ge 4k$, every set of $n$ real numbers with nonnegative sum has at least $\binom{n-1}{k-1}$ $k$-element subsets whose sum is also nonnegative.
\end{conjecture}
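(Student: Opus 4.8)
The plan is to prove this bound, which is best possible: taking one element equal to $n-1$ and the remaining $n-1$ elements equal to $-1$ gives a set with sum $0$ whose nonnegative $k$-subsets are exactly the $\binom{n-1}{k-1}$ that contain the large element, so any proof must show that no configuration beats this ``star''. The first step I would take is to dispatch the divisible case $k\mid n$ by averaging: choose a uniformly random partition of the $n$ reals into $n/k$ blocks of size $k$; since the total is nonnegative, at least one block has nonnegative sum, so the expected number of nonnegative-sum blocks is at least $1$. By symmetry each $k$-subset is equally likely to occur as a block, so this expectation equals $\frac{n/k}{\binom{n}{k}}$ times the number of nonnegative $k$-subsets, forcing that number to be at least
\[
\frac{k}{n}\binom{n}{k}=\binom{n-1}{k-1}.
\]

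The obstruction is that for general $n$, and in particular throughout the conjectured range $n\ge 4k$, there is no partition into blocks of size $k$, so the naive averaging loses a constant factor and must be replaced. I would attack the remaining cases in two regimes. For $n$ large relative to $k$, one route is to pass to the Johnson scheme: view the family of nonnegative $k$-subsets as a vertex set of the association scheme on $\binom{[n]}{k}$ and bound its size through the scheme's eigenvalues, using that a family cut out by the linear condition ``sum $\ge 0$'' is highly structured; arguments of this kind yield thresholds on the order of $k^2$. Bringing the threshold down to $n$ linear in $k$ seems to require a more combinatorial argument: isolate a ``heavy'' element $x$, show that all but a negligible fraction of nonnegative $k$-subsets contain $x$, and then induct on the remaining $n-1$ reals with the threshold shifted by the value of $x$, controlling the error term by a stability analysis around the star.

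The main obstacle, and where I expect the real work to be, is closing the gap between what such methods currently deliver --- $n$ at least some (large) constant times $k$ --- and the conjectured threshold $n\ge 4k$. When $n$ is only a small multiple of $k$ the star is no longer overwhelmingly dominant, so one needs a sharp stability statement: any family of nonnegative-sum $k$-subsets that is not essentially a star must be strictly smaller than $\binom{n-1}{k-1}$. Proving this seems to require a delicate compression or exchange argument combined with a careful case analysis of the possible value distributions of the $n$ reals, and I would not expect the averaging or eigenvalue ideas alone to suffice; a genuinely new stability result for sums over $k$-subsets is, I think, the crux.
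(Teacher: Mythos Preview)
The paper does not prove this statement. It is the Manickam--Mikl\'os--Singhi conjecture, quoted in the concluding section only as motivation for a more general problem; the paper explicitly lists it as open and cites partial results (the cases $k\mid n$, $n\ge 10^{46}k$, and $n\ge 8k^2$) without supplying a proof for the full range $n\ge 4k$. So there is no ``paper's own proof'' to compare against.

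Your proposal is not a proof either, and you are candid about this. The averaging argument you give for $k\mid n$ is exactly the classical Manickam--Mikl\'os argument and is correct. Beyond that, what you have written is a survey of known strategies (association-scheme eigenvalue bounds, heavy-element induction with stability) together with an honest acknowledgment that none of them reaches the threshold $4k$. The concrete gap is precisely the one you name: your induction/stability sketch has no mechanism to control the error when $n$ is only a small constant times $k$, and the eigenvalue route as stated gives at best a quadratic or large-linear threshold. In short, you have correctly identified where the difficulty lies, but you have not supplied the ``genuinely new stability result'' that you yourself say is required, so the proposal does not constitute a proof of the conjecture.
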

The equivalence is given by taking the complement of $B$ and applying a linear transformation.

The MMS conjecture has been proven for $k\mid n$ \cite{ManickamMiklos1988}, $n\ge 10^{46}k$ \cite{Pokrovskiy2013}, and $n\ge 8k^2$ \cite{ChowdhurySarkisShariari2013}, however there are pairs $(n,k)$ such that it does not hold. This suggests a more general problem.

\begin{problem}
Fix $S\subseteq [0,1]$ and positive integers $n$ and $k$. If $A$ is a set of positive reals, let $d_k(S,A)$ be the number of subsets $B\in\binom{A}{k}$ such that $\sum B=S$. Let $d(S,k,n)$ be the maximal value of $d(S,k,n)$ over all $A$ with $|A|=n$ and $\sum A=1$. For what $S,k,n$ do we get $d(S,k,n)=\binom{n-1}{k}$? Furthermore, when does $d_k(S,A)\ge\binom{n-1}{k}$ imply that $A$ is an $k$-anti-pencil?
\end{problem}
This paper addresses this problem for $S=\{\frac{s}{m}\mid m\in \Zp\}$, while the MMS conjecture deals with this problem for $S=(0,k/n)$. Another example of a set for which this problem might be interesting is a set of the form $S=(0,\alpha k/n)\cup \{\frac{s}{m}\mid m\in \Zp\}$, which combines the theorem of this paper with the MMS conjecture.
\section{Acknowledgements}
This research was conducted as part of the University of Minnesota Duluth REU program, supported by NSA grant H98230-13-1-0273 and NSF grant 1358659. I would like to thank Joe Gallian for his advice and support. I would also like to thank Adam Hesterberg and Timothy Chow for helpful discussions and suggestions. I would also like to thank Brian Scott for a useful answer on Math Stack Exchange (http://math.stackexchange.com/questions/299770/width-of-a-product-of-chains).
\bibliographystyle{plain}
\bibliography{zbarskybib}
\end{document}